\appto{\bibsetup}{\emergencystretch=2.6em} 
\renewenvironment{proof}[1][\proofname]{\par
  \pushQED{\qed}%
  \normalfont \topsep6\p@\@plus6\p@\relax
  \trivlist
  \item[\hskip\labelsep
        \sffamily\bfseries
    #1\@addpunct{.}]\ignorespaces
}{%
  \popQED\endtrivlist\@endpefalse
}
\renewcommand{\todo}[2][]{\tikzexternaldisable\@todo[#1]{#2}\tikzexternalenable}
\DeclareMathSymbol{\partial}{\mathord}{otherletters}{64}
\def\resetMathstrut@{%
  \setbox\z@\hbox{%
    \mathchardef\@tempa\mathcode`\(\relax
    \def\@tempb##1"##2##3{\the\textfont"##3\char"}%
    \expandafter\@tempb\meaning\@tempa \relax
  }%
  \ht\Mathstrutbox@1.2\ht\z@ \dp\Mathstrutbox@1.2\dp\z@
}
\newdimen\arrowsize
\newdimen\unit
\tikzset{
	commutative diagrams/.cd,
	arrow style=tikz,
	diagrams={>=fourier}
}
\DeclareMathAlphabet{\mathsfit}{\encodingdefault}{\sfdefault}{m}{sl}
\SetMathAlphabet{\mathsfit}{bold}{\encodingdefault}{\sfdefault}{bx}{sl}
\newcommand{\theoremname}{Theorem}
\newcommand{\propositionname}{Proposition}
\newcommand{\corollaryname}{Corollary}
\newcommand{\lemmaname}{Lemma}
\newcommand{\definitionname}{Definition}
\newcommand{\remarkname}{Remark}
\newcommand{\exercisename}{Exercise}
\newcommand{\examplename}{Example}
\newcommand*{\fancyrefthmlabelprefix}{thm}
\newcommand*{\fancyrefprnlabelprefix}{prn}
\newcommand*{\fancyrefcorlabelprefix}{cor}
\newcommand*{\fancyreflemlabelprefix}{lem}
\newcommand*{\fancyrefdeflabelprefix}{def}
\newcommand*{\fancyrefremlabelprefix}{rem}
\newcommand*{\fancyrefexelabelprefix}{exe}
\newcommand*{\fancyrefexalabelprefix}{exa}
    \newcommand*{\Frefthmname}{\theoremname}%
    \newcommand*{\Frefprnname}{\propositionname}%
    \newcommand*{\Frefcorname}{\corollaryname}%
    \newcommand*{\Freflemname}{\lemmaname}%
    \newcommand*{\Frefdefname}{\definitionname}%
    \newcommand*{\Frefremname}{\remarkname}%
    \newcommand*{\Frefexename}{\exercisename}%
    \newcommand*{\Frefexaname}{\examplename}%
    \newcommand*{\frefthmname}{\MakeLowercase{\Frefthmname}}%
    \newcommand*{\frefprnname}{\MakeLowercase{\Frefprnname}}%
    \newcommand*{\frefcorname}{\MakeLowercase{\Frefcorname}}%
    \newcommand*{\freflemname}{\MakeLowercase{\Freflemname}}%
    \newcommand*{\frefdefname}{\MakeLowercase{\Frefdefname}}%
    \newcommand*{\frefremname}{\MakeLowercase{\Frefremname}}%
    \newcommand*{\frefexename}{\MakeLowercase{\Frefexename}}%
    \newcommand*{\frefexaname}{\MakeLowercase{\Frefexaname}}%
\DeclareMathOperator{\LieSp}{Sp}
\DeclareMathOperator{\Liegl}{\mathfrak{gl}}
\DeclareMathOperator{\Liesp}{\mathfrak{sp}}
\DeclareMathOperator{\dexp}{dexp}
\DeclareMathOperator{\cay}{cay}
\DeclareMathOperator{\OO}{O}
\DeclareMathOperator{\Diff}{Diff}
\newcommand{\dd}{\mathrm{d}}
\newcommand{\TT}{\mathrm{T}}
\newcommand{\id}{\mathrm{id}}
\newcommand{\II}{\mathrm{I}}
\newcommand{\JJ}{\mathrm{J}}
\newcommand{\RR}{\mathbb{R}}
\DeclarePairedDelimiter{\paren}{(}{)}
\DeclarePairedDelimiterX{\innerprod}[2]{\langle}{\rangle}{#1, #2}
\DeclarePairedDelimiter{\eval}{.}{\rvert}
\newcommand{\defeq}{\coloneqq}
\newcommand{\diff}[2]{\frac{\dd #1}{\dd #2}}
\newcommand{\pdiff}[2]{\frac{\partial #1}{\partial #2}}
\newcommand{\from}{\mathpunct{:}}
\newcommand{\trans}{^\mathrm{T}}
\newcommand{\coT}{\TT^{*}\!}
\newcommand{\extdiff}{\mathrm{d}}
\newcommand{\intprod}{\mathrm{i}}
\begin{document}

\newtheoremstyle{thmstyle} 
	{\topsep}              
	{\topsep}              
	{\itshape}             
	{}                     
	{\bfseries\sffamily}   
	{.}                    
	{.5em}                 
	{\thmname{#1}\thmnumber{ #2}\thmnote{ (#3)}} 
\theoremstyle{thmstyle}
\newtheorem{theorem}{\theoremname}[section]
\newtheorem{proposition}[theorem]{\propositionname}
\newtheorem{corollary}[theorem]{\corollaryname}
\newtheorem{lemma}[theorem]{\lemmaname}

\newtheoremstyle{defstyle}
	{\topsep}
	{\topsep}
	{}
	{}
	{\bfseries\sffamily}
	{.}
	{.5em}
	{\thmname{#1}\thmnumber{ #2}\thmnote{ (#3)}}
\theoremstyle{defstyle}
\newtheorem{definition}[theorem]{\definitionname}
\newtheorem{remark}[theorem]{\remarkname}
\newtheorem{example}[theorem]{\examplename}
\newtheorem{exercise}{\exercisename}[section]


\title{Geometric integration of non-autonomous Hamiltonian problems}
\author{Håkon Marthinsen\thanks{Department of Mathematical Sciences, Norwegian University of Science and Technology (NTNU).} \and Brynjulf Owren\footnotemark[1]}

\date{\today}

\maketitle

\begin{abstract}
	\noindent Symplectic integration of autonomous Hamiltonian systems is a well-known field of study in geometric numerical integration, but for non-autonomous systems the situation is less clear, since symplectic structure requires an even number of dimensions.
	We show that one possible extension of symplectic methods in the autonomous setting to the non-autonomous setting is obtained by using canonical transformations.
	Many existing methods fit into this framework.
	We also perform experiments which indicate that for exponential integrators, the canonical and symmetric properties are important for good long time behaviour.
	In particular, the theoretical and numerical results support the well documented fact from the literature that exponential integrators for non-autonomous linear problems have superior accuracy compared to general ODE schemes.
\end{abstract}

\section{Introduction}


An important property of a Hamiltonian system is that its flow is a symplectic map.
The idea of devising numerical methods which are themselves symplectic maps goes back into the previous century, some early references are \parencite{ruth83,feng87}.
The monographs by \textcite{hairer06,leimkuhler04} may be consulted for an extensive treatment.
Such numerical methods are called symplectic integrators and their success is often explained through the well known fact that any symplectic map can be identified as the exact flow of a local, perturbed Hamiltonian problem.
This ensures good long time behaviour in the sense that the exact Hamiltonian is approximately conserved over exponentially long times and that the numerical approximation also nearly preserves invariant tori of the exact flow.
Methods which do not possess this symplectic property will often exhibit a drift in the energy and even their global accuracy will typically deteriorate faster over long times than symplectic schemes.

As discussed in \parencite{asorey83}, a particularly attractive feature of the Hamiltonian formulation of mechanics compared to its Lagrangian counterpart is that the former distinguishes between the geometry of the problem represented by a symplectic structure and the dynamical aspects which are represented by the Hamiltonian function.
In the Lagrangian formulation this feature is absent since the symplectic structure is partly encoded in the Lagrangian function.
Turning now to time-dependent systems, we assume that the dependent variables belong to some cotangent bundle~$\coT Q$.
The usual Hamiltonian description introduces a contact structure on the space~$\coT Q \times \RR$.
By definition, this structure depends on the time-dependent Hamiltonian~$H(q, p, t)$, and thus the separation between the geometry and dynamics is again lost.
The geometric meaning of a canonical transformation is therefore no longer clear as in the autonomous case.
A common approach is to extend the system by adding an extra position variable.
This variable can be interpreted as a new time variable.
Then one may consider the extended phase space $\coT (Q \times \RR)$ which can be furnished with a symplectic form, see for instance \parencite{struckmeier05}.

In the late 1990s a renewed interest in the numerical solution of linear non-autonomous differential equations was sparked, in particular through some pioneering papers by Iserles and Nørsett, see e.g.~\parencite{iserles99}, where they developed numerical methods based on the Magnus expansion~\parencite{magnus54}.
There are also other similar ways of representing the exact flow of such problems, for instance the Fer expansion~\parencite{fer58}, see also \parencite{iserles84} and \parencite{blanes98}.
This activity resulted in several new contributions to the numerical solution of linear and quasi-linear non-autonomous PDEs, see e.g.~\parencite{hochbruck03,gonzalez06,gonzalez07}.
Another application branch of such methods is highly oscillatory linear non-autonomous ODEs.
Asymptotic analysis can be used to show excellent behaviour of the global error when the dominating frequencies of the problem tend to infinity, see for instance \parencite{grimm06} and \parencite{iserles02}.

In this paper, we attempt to present a more geometric view on integrators for non-autono\-mous systems, and we give particular attention to methods which have an exponential character, such as Magnus integrators.
We use the definition of canonical transformations introduced by \textcite{asorey83}.
Their framework is relatively general and we shall consider the question of which numerical integrators can be characterized as canonical transformations.
In particular we shall see that the most common exponential integrators for non-autonomous linear problems can be furnished with such a property.
Finally, we provide numerical evidence showing that canonicity in this sense together with symmetry of the scheme appear to be important for the long term behaviour of integrators.
It is well known from the literature that if such methods are also exponential, they can have excellent properties, as is for instance the case for Magnus integrators.
However, being exponential without any of these two additional properties will typically not yield a good approximation of the Hamiltonian over long times.

%
%
%
%
%
%
%
%
%
%
%
%

\section{Four classes of problems}


In this section we consider the four possible combinations of autonomous and non-autono\-mous, linear and non-linear differential equations.

\paragraph*{Autonomous, linear (AL) problems.}

The AL case can be written as
\[
	\dot y = A y, \quad y(0) = y_0,
\]
where $A \in \RR^{d \times d}$ is constant.
The solution to AL problems can be represented exactly by means of the matrix exponential,
\[
	y(t) = \exp(t A) y_0,
\]
thus, numerical methods for this class amount to considering methods of computing or approximating the matrix exponential, see e.g.~\parencite{moler03}.
We will not consider AL problems in this paper.

\paragraph*{Autonomous, non-linear (AN) problems.}
The AN case can be written as
\[
	\dot y = f(y), \quad y(0) = y_0,
\]
where $f \from \RR^d \to \RR^d$.
Most numerical schemes for ordinary differential equations are conveniently applied to problems written in this format and are treated in several monographs and textbooks such as \parencite{hairer93}.

\paragraph*{Non-autonomous, linear (NL) problems.}

The NL case can be written as
\[
	\dot y = A(t) y, \quad y(0) = y_0,
\]
where $A \from \RR \to \RR^{d \times d}$.
Since this problem class constitutes a subset of the non-linear problems, most general numerical schemes for ODEs can be applied also to this class.
However, there exist several classes of integrators which are tailored for this problem type, two of which are the Magnus methods \parencite{iserles99} and methods based on the Fer expansion \parencite{fer58}.
In particular, such methods have found applications to non-autonomous linear PDEs such as the time-dependent Schrödinger equations \parencite{hochbruck03} and to highly oscillatory problems, see~\parencite{iserles02,khanamiryan12} and the references therein.

We can turn NL problems into AN problems by substituting $t$ with a new variable~$y^{d + 1}$ and appending the ODE~$\dot y^{d + 1} = 1$.
This process is called \emph{autonomization}.
By doing this, we are replacing a linear problem by a non-linear problem, which may be more difficult to solve numerically.
NL problems are the main focus of this paper.

\paragraph*{Non-autonomous, non-linear (NN) problems.}

The NN case can be written as
\[
	\dot y = f(y, t), \quad y(0) = y_0,
\]
where $f \from \RR^d \times \RR \to \RR^d$.
NN problems can be turned into AN problems by autonomization.
This class of problems is not the main focus in this paper.

\section{Autonomous and non-autonomous Hamiltonian mechanics}
\label{sec:ham-mech}

In this section, we discuss the dynamics of autonomous (i.e.\ time-independent) and non-autonomous (i.e.\ time-dependent) Hamiltonian systems.

\subsection{Autonomous Hamiltonian systems}
We will first review the basics of autonomous Hamiltonian systems~\parencite{marsden99-1,arnold89}.
Let $Q$ be a smooth $n$-dimen\-sional manifold, and denote its cotangent bundle as $\coT Q$.
The manifold~$Q$ is called the configuration space, and $\coT Q$ is called the phase space.
We will often use $(q, p)$ as an element of $\coT Q$, where $q \in Q$ and $p \in \mathrm{T}^{*}_q Q$.
A Hamiltonian~$H \from \coT Q \to \RR$, together with a symplectic 2-form~$\omega_0$ on $\coT Q$, determine the Hamiltonian vector field~$X_H$ via the equation
\begin{equation} \label{eq:ham-vf}
	\intprod_{X_H} \omega_0 = -\extdiff H,
\end{equation}
where $\extdiff$ and $\intprod$ are the exterior derivative and the interior product, respectively.
In canonical (also called Darboux) coordinates~$(q^i, p_i)$, we can write $\omega_0 = \extdiff p_i \wedge \extdiff q^i$ (with implicit summation over repeated indices), and \Fref{eq:ham-vf} turns into Hamilton's equations,
\[
	\dot q^i = \pdiff{H}{p_i}, \qquad \dot p_i = -\pdiff{H}{q^i}, \qquad \text{for all } 1 \leq i \leq n.
\]
It can be easily proved~\parencite[Section~5.4]{marsden99-1} that the autonomous Hamiltonian~$H$ and the symplectic form~$\omega_0$ are conserved along the integral curves of $X_H$.

Hamiltonian AN problems can be solved numerically by standard symplectic integrators~\parencite[Chapter~VI]{hairer06}, e.g.\ using symplectic, partitioned Runge--Kutta (SPRK) methods.

\subsection{Non-autonomous Hamiltonian systems} \label{sec:non-aut-Ham-sys}

We will now consider then non-autonomous case, i.e.\ when $H$ depends on time as well as phase space, so $H \from \coT Q \times \RR \to \RR$.
The characterization of Hamiltonian vector fields using the symplectic 2-form~\Fref{eq:ham-vf} is no longer appropriate, since $\coT Q \times \RR$ is an odd-dimensional space, while the symplectic 2-form requires an even-dimensional phase space.
Hamilton's equations still apply unchanged, but $H$ is no longer conserved along the integral curves of $X_H$.

Let
\[
	\JJ_n \defeq \begin{bmatrix}
		0      & \II_n \\
		-\II_n & 0
	\end{bmatrix},
\]
where $\II_n$ is the $n \times n$ identity matrix.
Writing $y = (q, p)$ as a column vector, Hamilton's equations in canonical coordinates become
\[
	\dot y = \JJ_n H_y\trans, \qquad H_y = \begin{bmatrix}
		\pdiff{H}{q^1} & \cdots & \pdiff{H}{q^n} & \pdiff{H}{p_1} & \cdots & \pdiff{H}{p_n}
	\end{bmatrix}.
\]

For the case of Hamiltonian NL problems, we need $\dot y = A(t) y$.
Consider a generic Hamiltonian which is quadratic in the phase space variables,
\[
	H = -\tfrac{1}{2} y\trans \JJ_n A(t) y.
\]
We may assume without loss of generality that $A(t) \in \Liesp(2 n)$.
Since $\JJ_n A + A\trans \JJ_n = 0$, it follows that $\JJ_n A$ is symmetric, and we get $\dot y = \JJ_n H_y\trans = A(t) y$.


\subsubsection{Contact structure}
The usual way to tackle this problem is to apply \emph{contact structure}~\parencite[Chapter~5]{abraham78}, \parencite[Appendix~4]{arnold89}.
We use notation similar to \textcite{asorey83}.

Let $\tau \from \coT Q \times \RR \to \coT Q$ be the projection~$(q, p, t) \mapsto (q, p)$, and let $\omega_0 = \extdiff p_i \wedge \extdiff q^i$ be the canonical symplectic form on $\coT Q$, as before.
Define $\tilde \omega_0 \defeq \tau^{*} \omega_0$.
The contact structure on $\coT Q \times \RR$ is then given by the contact form
\[
	\omega_H \defeq \tilde \omega_0 - \extdiff H \wedge \extdiff t.
\]
This enables us to define the (time-dependent) vector field~$X_H$ via
\[
	\intprod_{X_H} \omega_H = 0, \qquad \intprod_{X_H} \extdiff t = 1,
\]
or in canonical coordinates
\begin{equation} \label{eq:contact-Ham}
	\dot q^i = \pdiff{H}{p_i}, \qquad \dot p_i = -\pdiff{H}{q^i}, \qquad \dot t = 1, \qquad \text{for all } 1 \leq i \leq n.
\end{equation}
The contact form~$\omega_H$ is preserved along the flow of $X_H$, but $H$ is not.

\subsubsection{Extended phase space}
An alternative to using contact structure is to append one more dimension to $\coT Q \times \RR$, thus obtaining an even-dimensional \emph{extended phase space}~$\coT(Q \times \RR)$.
Because of this, we may now mimic the autonomous case and define the Hamiltonian system using symplectic forms.
We denote the new variable as $u$.

Let $(q, p, t, u) \in \coT (Q \times \RR)$, and let $\mu \from \coT (Q \times \RR) \to \coT Q \times \RR$ be the projection $(q, p, t, u) \mapsto (q, p, t)$.
We define the \emph{extended Hamiltonian}~$K \from \coT(Q \times \RR) \to \RR$ as
\[
	K \defeq H \circ \mu + u,
\]
and the symplectic form on the extended phase space as
\begin{equation} \label{eq:Omega_0}
	\Omega_0 \defeq \mu^{*} \tilde \omega_0 + \extdiff u \wedge \extdiff t,
\end{equation}
or in canonical coordinates, $\Omega_0 = \extdiff p_i \wedge \extdiff q^i + \extdiff u \wedge \extdiff t$.
The vector field~$X_K$ is then defined the same way as in the autonomous case by
\[
	\intprod_{X_K} \Omega_0 = -\extdiff K,
\]
which in canonical coordinates can be written as
\begin{equation} \label{eq:ext-ham}
	\dot q^i = \pdiff{K}{p_i} =  \pdiff{H}{p_i}, \qquad \dot p_i = -\pdiff{K}{q^i} = -\pdiff{H}{q^i}, \qquad \dot t = \pdiff{K}{u} = 1, \qquad \dot u = -\pdiff{K}{t} = -\pdiff{H}{t},
\end{equation}
for all $1 \leq i \leq n$.
Note that $H$ does not depend on $u$, so we can consider the equation for $\dot u$ as superfluous.
If we disregard the equation for $\dot u$, the equations are the same as for the contact structure approach~\eqref{eq:contact-Ham}.
Thus, the integral curves in extended phase space project (via $\mu$) onto the integral curves defined by the contact structure in $\coT Q \times \RR$.
However, if we want to retain the usual notion of symplecticity of the flow of the vector field, we need to retain the equation for $\dot u$.

Analogous to the autonomous case, both $\Omega_0$ and $K$ are conserved along the flow of $X_K$.
If we choose the initial values~$q_0$, $p_0$, $t_0$, and $u_0 = -H(q_0, p_0, t_0)$, we get that $K = 0$ along the flow.
This allows us to interpret $-u$ as the energy of the Hamiltonian system.

\subsection{Canonical transformations}

It is a well known fact that symplectic integrators for autonomous problems have excellent long-time properties,
however it is not clear whether the same is true for non-autonomous problems.
An enticing thought is to use the constructions from the previous section so that we get a well-defined concept replacing symplecticity for the non-autonomous case.
The solution employed by \textcite{asorey83} is to extend symplectic maps to canonical transformations, as defined below.

\begin{definition} \label{def:can-trans}
	A canonical transformation of a time-dependent system~$(\coT Q \times \RR, \omega_H)$ is a pair~$(\psi, \varphi)$ of diffeomorphisms, $\psi$ on $\coT(Q \times \RR)$ and $\varphi$ on $\coT Q \times \RR$ such that
	\begin{enumerate}
		\item $\mu \circ \psi = \varphi \circ \mu$, and
		\item $\psi^{*} \Omega_0 = \Omega_0$ (i.e.\ $\psi$ is a symplectomorphism).
	\end{enumerate}
\end{definition}
The condition~$\mu \circ \psi = \varphi \circ \mu$ means that the diagram
\[
	\begin{tikzcd}
		\coT (Q \times \RR) \rar{\mu} \dar{\psi} & \coT Q \times \RR \dar{\varphi} \\
		\coT (Q \times \RR) \rar{\mu}            & \coT Q \times \RR
	\end{tikzcd}
\]
commutes.
A consequence of \Fref{def:can-trans} is that $\psi$ must be a symplectomorphism of the form $\psi(q, p, t, u) = \paren*{\varphi(q, p, t), \bar \varphi(q, p, t, u)}$, where $\bar \varphi \from \coT(Q \times \RR) \to \RR$.
We will sometimes refer to $\psi$ as a canonical transformation when there exists a $\varphi$ such that $(\psi, \varphi)$ is a canonical transformation.

Many integrators already exist in $\coT Q \times \RR$, e.g.\ Magnus integrators for linear problems.
Given such an integrator~$\varphi$, we seek a matching $\psi$ such that $(\psi, \varphi)$ is a canonical transformation.
Using ideas similar to those of \textcite{asorey83}, we have the following theorem which characterizes canonical transformations where time is advanced by a constant~$h$.

\begin{theorem} \label{thm:existence}
	Let $\varphi$ be a diffeomorphism of\/ $\coT Q \times \RR$ where the $t$-component is advanced by a constant time-step~$h$.
	Then the following are equivalent.
	\begin{enumerate}[label=(\roman*),ref=\roman*]
		\item \label{it:ex-1} $(\psi, \varphi)$ is a canonical transformation.
		\item \label{it:ex-2} There exists a function~$W \from \coT Q \times \RR \to \RR$ such that
		\begin{equation} \label{eq:omega-tilde}
			\varphi^{*} \tilde \omega_0 = \tilde \omega_0 - \extdiff W \wedge \extdiff t,
		\end{equation}
		and $\psi = (\varphi \circ \mu, u + W \circ \mu)$.
	\end{enumerate}
\end{theorem}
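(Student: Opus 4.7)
The plan is to reduce both directions to a single coordinate computation of $\psi^{*}\Omega_{0}$ and then argue by matching of differential forms. By condition~1 of \Fref{def:can-trans}, any admissible $\psi$ is forced into the form $\psi(q,p,t,u) = \paren*{\varphi(q,p,t),\,\bar\varphi(q,p,t,u)}$ for some smooth scalar function $\bar\varphi$, so the unknown is reduced to $\bar\varphi$. The constant time-step hypothesis enters here to guarantee $\psi^{*}\extdiff t = \extdiff t$, keeping the subsequent wedge computation clean (without it, $\psi^{*}\extdiff t$ would pick up $\extdiff q$, $\extdiff p$ terms and the separation by coefficients below would fail).

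First I would expand
\[
\psi^{*}\Omega_{0} \;=\; (\mu\circ\psi)^{*}\tilde\omega_{0} + \extdiff\bar\varphi\wedge\extdiff t \;=\; \mu^{*}\varphi^{*}\tilde\omega_{0} + \extdiff\bar\varphi\wedge\extdiff t,
\]
using condition~1 to rewrite $\mu\circ\psi = \varphi\circ\mu$. Setting $\psi^{*}\Omega_{0}=\Omega_{0}$ then rearranges to
\[
\mu^{*}(\varphi^{*}\tilde\omega_{0}-\tilde\omega_{0}) \;=\; (\extdiff u - \extdiff\bar\varphi)\wedge\extdiff t.
\]

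For (\ref{it:ex-1})$\Rightarrow$(\ref{it:ex-2}), the crucial structural step is matching the coefficients of $\extdiff u\wedge\extdiff t$: the left-hand side has no $\extdiff u$ component at all, since $\mu$ ignores the $u$-coordinate, whereas the right-hand side contributes $(1-\partial\bar\varphi/\partial u)\,\extdiff u\wedge\extdiff t$. This forces $\partial\bar\varphi/\partial u=1$, hence $\bar\varphi = u + W\circ\mu$ for some $W\from\coT Q\times\RR\to\RR$. Substituting back gives $\mu^{*}\paren*{\varphi^{*}\tilde\omega_{0}-\tilde\omega_{0}+\extdiff W\wedge\extdiff t}=0$, and the desired identity~\eqref{eq:omega-tilde} follows because $\mu$ is a surjective submersion, so $\mu^{*}$ is injective on differential forms. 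The reverse implication (\ref{it:ex-2})$\Rightarrow$(\ref{it:ex-1}) runs the same computation backwards: defining $\psi\defeq(\varphi\circ\mu,\,u+W\circ\mu)$ makes condition~1 automatic, and a direct expansion using~\eqref{eq:omega-tilde} shows the correction $\mu^{*}(\extdiff W\wedge\extdiff t)$ cancels against $\extdiff(W\circ\mu)\wedge\extdiff t$, recovering $\psi^{*}\Omega_{0}=\Omega_{0}$.

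I expect the main (and essentially only) obstacle to be cleanly extracting the shape $\bar\varphi = u + W\circ\mu$ from the $\extdiff u$-matching argument; once this structural fact is in place, everything else is bookkeeping. A secondary point worth spelling out carefully is the appeal to injectivity of $\mu^{*}$ on forms to descend the equality from $\coT(Q\times\RR)$ to the odd-dimensional base $\coT Q\times\RR$, since without this step one only gets the identity pulled back to the extended phase space rather than on $\coT Q\times\RR$ itself.
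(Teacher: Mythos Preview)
Your proof is correct, and the reverse implication matches the paper's computation essentially verbatim. For the forward direction, however, you take a different route than the paper. You read off $\partial\bar\varphi/\partial u=1$ by matching the $\extdiff u\wedge\extdiff t$-coefficients, immediately pinning down $\bar\varphi=u+W\circ\mu$, and then descend the $2$-form identity from $\coT(Q\times\RR)$ to $\coT Q\times\RR$ by invoking injectivity of $\mu^{*}$ on forms (valid because $\mu$ is a surjective submersion). The paper instead chooses a section $\nu$ of $\mu$, \emph{defines} the candidate $W\defeq\nu^{*}(\psi^{*}u-u)$, pulls the master identity back along $\nu$ to obtain~\eqref{eq:omega-tilde}, and only afterwards proves $\psi^{*}u=u+\mu^{*}W$ by showing the discrepancy $f\defeq\mu^{*}W-\psi^{*}u+u$ satisfies $\extdiff f\wedge\extdiff t=0$ and $f\circ\nu=0$. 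Your coefficient-matching step and the paper's section argument are really two sides of the same coin (a right inverse to $\mu$ versus injectivity of $\mu^{*}$), but your ordering is cleaner: you get the shape of $\bar\varphi$ for free and avoid the auxiliary function $f$ entirely. The paper's approach has the minor advantage of producing $W$ by an explicit formula rather than as an integration constant.
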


\begin{proof}
	Assume that \eqref{it:ex-1} is true.
	We know that $\psi^{*} \Omega_0 = \Omega_0$.
	Inserting \Fref{eq:Omega_0}, applying $\mu \circ \psi = \varphi \circ \mu$ and $\psi^{*} \extdiff t = \extdiff t$, and rearranging, we get
	\begin{equation} \label{eq:thm-proof-1}
		\mu^{*} (\varphi^{*} \tilde \omega_0 - \tilde \omega_0) = -\extdiff (\psi^{*} u - u) \wedge \extdiff t.
	\end{equation}
	Let~$\nu \from \coT Q \times \RR \to \coT (Q \times \RR)$ be any map such that $\mu \circ \nu = \id$.
	Our candidate function is $W = \nu^{*} (\psi^{*} u - u)$.
	We apply $\nu^{*}$ to both sides of \eqref{eq:thm-proof-1}, insert the candidate function, and get
	\[
		\varphi^{*} \tilde \omega_0 - \tilde \omega_0 = -\extdiff W \wedge \extdiff \nu^{*} t.
	\]
	In the following, we will use the same symbol for the coordinate function for time $t$ in both $\coT (Q \times \RR)$ and $\coT Q \times \RR$.
	Since $\mu^{*} t = t$, we also have that $\nu^{*} t = t$ and we end up with \eqref{eq:omega-tilde}.
	Inserting \eqref{eq:omega-tilde} into \eqref{eq:thm-proof-1}, applying $\mu^{*} t = t$, and rearranging, we obtain
	\begin{equation} \label{eq:f}
		\extdiff (\mu^{*} W - \psi^{*} u + u) \wedge \extdiff t = 0.
	\end{equation}
	Let $f \defeq \mu^{*} W - \psi^{*} u + u$.
	From \eqref{eq:f}, we see that $f$ can only depend on $t$.
	If we apply $\nu^{*}$ to $f$ and insert the candidate function, we see that $f \circ \nu = 0$.
	Since $f$ only depends on $t$, this implies that $f = 0$, proving that $u \circ \psi = u + W \circ \mu$.
	
	Conversely, assume now that \eqref{it:ex-2} is true.
	The map $\psi$ is given by
	\begin{equation} \label{eq:psi}
		\mu \circ \psi = \varphi \circ \mu \qquad \text{together with} \qquad u \circ \psi = u + W \circ \mu.
	\end{equation}
	We apply $\mu^{*}$ and \Fref{eq:psi} to \Fref{eq:omega-tilde} and get
	\[
		\psi^{*} \mu^{*} \tilde \omega_0 = \mu^{*} \tilde \omega_0 - \mu^{*} (\extdiff W \wedge \extdiff t).
	\]
	Since $\mu^{*} t = t$, we get
	\[
		\psi^{*} \mu^{*} \tilde \omega_0 = \mu^{*} \tilde \omega_0 - \extdiff (\psi^{*} u - u) \wedge \extdiff t.
	\]
	Using \Fref{eq:Omega_0}, we obtain
	\[
		\psi^{*} (\Omega_0 - \extdiff u \wedge \extdiff t) = \Omega_0 - \extdiff u \wedge \extdiff t - \extdiff (\psi^{*} u - u) \wedge \extdiff t,
	\]
	or, by applying the fact that $\psi^{*} \extdiff t = \extdiff t$,
	\[
		\psi^{*} \Omega_0 - \Omega_0 = \extdiff \paren*{\psi^{*} u - u - (\psi^{*} u - u)} \wedge \extdiff t = 0.
	\]
\end{proof}

From this point, we will work in canonical coordinates.
This will make the connection with existing numerical methods clearer, as well as provide formulas that can be used directly in numerical calculations.
We will regard $q = (q^i)_{i = 1}^n$, $Q = (Q^i)_{i = 1}^n$, $p = (p^i)_{i = 1}^n$, and $P = (P^i)_{i = 1}^n$ as column vectors.
Let $z = (q, t, p, u)$ and $Z = (Q, t + h, P, U) = \psi(z)$ be column vectors in $\RR^{2 n + 2}$, with $U = u + W(q, p, t)$.
Since $Q$ and $P$ are independent of $u$, the Jacobian matrix~$Z_z \defeq \psi'(z)$ can be written as
\[
	Z_z =
	\begin{bmatrix}
		Q_q & Q_t & Q_p & 0 \\
		0   & 1   & 0   & 0 \\
		P_q & P_t & P_p & 0 \\
		W_q & W_t & W_p & 1
	\end{bmatrix},
	\qquad \text{where} \qquad Q_q \defeq
	\begin{bmatrix}
		\pdiff{Q^1}{q^1} & \cdots & \pdiff{Q^1}{q^n} \\
		\vdots           &        & \vdots           \\
		\pdiff{Q^n}{q^1} & \cdots & \pdiff{Q^n}{q^n}
	\end{bmatrix},
\]
and similarly for the other submatrices.
Let
\[
	Y_y \defeq
	\begin{bmatrix}
		Q_q & Q_p \\
		P_q & P_p
	\end{bmatrix}, \qquad \text{and} \qquad Y_t \defeq
	\begin{bmatrix}
		Q_t \\
		P_t
	\end{bmatrix}.
\]
\begin{proposition} \label{prn:coord}
	In canonical coordinates, condition~\Fref{eq:omega-tilde} in \Fref{thm:existence} is equivalent to $Y_y \in \LieSp(2n)$ together with
	\begin{equation} \label{eq:W-cond}
		W_y \defeq \begin{bmatrix} W_q & W_p \end{bmatrix} = -Y_t\trans \JJ_n Y_y.
	\end{equation}
\end{proposition}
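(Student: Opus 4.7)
The plan is to compute both sides of the condition $\varphi^{*} \tilde\omega_0 = \tilde\omega_0 - \extdiff W \wedge \extdiff t$ directly in canonical coordinates and match the resulting terms according to how many factors of $\extdiff t$ they contain.

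First I would write $\tilde\omega_0 = \extdiff p_i \wedge \extdiff q^i$, so that $\varphi^{*}\tilde\omega_0 = \extdiff P_i \wedge \extdiff Q^i$, where now $Q^i$ and $P_i$ are functions of $(q,p,t)$. Using the block notation $Y=(Q,P)$, $y=(q,p)$, I would express $\tilde\omega_0 = -\tfrac{1}{2}\,\extdiff y\trans \wedge \JJ_n\,\extdiff y$ (a quick check with $\JJ_n$ confirms the sign) and similarly $\varphi^{*}\tilde\omega_0 = -\tfrac{1}{2}\,\extdiff Y\trans \wedge \JJ_n\,\extdiff Y$. Expanding $\extdiff Y = Y_y\,\extdiff y + Y_t\,\extdiff t$ and multiplying out gives three contributions: a pure $\extdiff y \wedge \extdiff y$ piece, two mixed $\extdiff y \wedge \extdiff t$ pieces, and a $\extdiff t \wedge \extdiff t$ piece which vanishes.

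Next I would match these against the RHS $\tilde\omega_0 - \extdiff W \wedge \extdiff t$, whose $\extdiff y \wedge \extdiff y$ part is $-\tfrac12\,\extdiff y\trans \wedge \JJ_n\,\extdiff y$ and whose $\extdiff y \wedge \extdiff t$ part is $-W_y\,\extdiff y \wedge \extdiff t$. Comparison of the $\extdiff y \wedge \extdiff y$ parts yields
\begin{equation*}
    -\tfrac12\,\extdiff y\trans \wedge Y_y\trans \JJ_n Y_y\,\extdiff y = -\tfrac12\,\extdiff y\trans \wedge \JJ_n\,\extdiff y,
\end{equation*}
which is exactly the condition $Y_y\trans \JJ_n Y_y = \JJ_n$, i.e.\ $Y_y \in \LieSp(2n)$.

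For the cross terms, I would combine the two mixed contributions $\extdiff y\trans Y_y\trans \wedge \JJ_n Y_t\,\extdiff t$ and $\extdiff t\,Y_t\trans \JJ_n Y_y \wedge \extdiff y$ by reordering the wedge factors and using the antisymmetry $\JJ_n\trans = -\JJ_n$; the two terms combine (rather than cancel) into $-\,Y_t\trans \JJ_n Y_y\,\extdiff y \wedge \extdiff t$. Setting this equal to $-W_y\,\extdiff y \wedge \extdiff t$ gives the desired identity $W_y = -Y_t\trans \JJ_n Y_y$. Conversely, if both conditions in the proposition hold, the same calculation, read backwards, reconstructs \Fref{eq:omega-tilde}.

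The main obstacle I anticipate is purely notational: keeping track of signs when commuting a column of $1$-forms past a row of $1$-forms through a wedge, and verifying that the two cross terms add up with the expected coefficient. Once the bookkeeping with $\JJ_n\trans = -\JJ_n$ is settled, the rest is a direct matching of coefficients, and the equivalence follows without further hypotheses.
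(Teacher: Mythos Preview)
Your approach is essentially the paper's: both expand $\varphi^{*}\tilde\omega_0$ in canonical coordinates and match terms against the right-hand side; the paper does this in index form (arriving at five block equations \eqref{eq:sympl-1}--\eqref{eq:sympl-5} which it then repackages), while you work directly with the compact expression $\tilde\omega_0 = -\tfrac12\,\extdiff y\trans \wedge \JJ_n\,\extdiff y$.

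One bookkeeping slip to fix, of exactly the kind you anticipated: as written, your intermediate claim and your conclusion are inconsistent. If the cross terms combined to $-Y_t\trans \JJ_n Y_y\,\extdiff y \wedge \extdiff t$, then equating with $-W_y\,\extdiff y \wedge \extdiff t$ would give $W_y = +Y_t\trans \JJ_n Y_y$, the wrong sign. In fact the two raw mixed pieces in $\extdiff Y\trans \wedge \JJ_n\,\extdiff Y$ sum to $-2\,Y_t\trans \JJ_n Y_y\,\extdiff y \wedge \extdiff t$, so after the $-\tfrac12$ the cross part of $\varphi^{*}\tilde\omega_0$ is $+Y_t\trans \JJ_n Y_y\,\extdiff y \wedge \extdiff t$; matching this with $-W_y\,\extdiff y \wedge \extdiff t$ then yields \eqref{eq:W-cond} as desired. (A second minor point: when matching the $\extdiff y\wedge\extdiff y$ parts you should note that only the antisymmetric part of a matrix contributes, but since $Y_y\trans \JJ_n Y_y$ is automatically antisymmetric this causes no loss.)
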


\begin{proof}
	Assume that \Fref{eq:omega-tilde} is satisfied.
	In canonical coordinates, \Fref{eq:omega-tilde} is
	\[
		\extdiff P_i \wedge \extdiff Q^i = \extdiff p_i \wedge \extdiff q^i - \paren[\bigg]{\pdiff{W}{q^i} \extdiff q^i + \pdiff{W}{p_i} \extdiff p_i} \wedge \extdiff t.
	\]
	It is straight-forward to show that this is equivalent to the five equations
	\begin{align}
		\II_n &= P_p\trans Q_q - Q_p\trans P_q, \label{eq:sympl-1} \\
		0     &= P_q\trans Q_q - Q_q\trans P_q, \label{eq:sympl-2} \\
		0     &= P_p\trans Q_p - Q_p\trans P_p, \label{eq:sympl-3} \\
		W_q   &= P_t\trans Q_q - Q_t\trans P_q, \label{eq:sympl-4} \\
		W_p   &= P_t\trans Q_p - Q_t\trans P_p. \label{eq:sympl-5}
	\end{align}
	Equations~\Fref{eq:sympl-1}--\Fref{eq:sympl-3} may be written as
	\begin{equation*} 
		Y_y\trans \JJ_n Y_y = \JJ_n,
	\end{equation*}
	which implies $Y_y \in \LieSp(2 n)$.
	Using $U = u + W$, we can write conditions~\Fref{eq:sympl-4}--\Fref{eq:sympl-5} as
	\begin{equation*}
		W_y = -Y_t\trans \JJ_n Y_y.
	\end{equation*}
	To prove the converse, simply reverse the proof.
\end{proof}

In the autonomous setting, canonical transformations are equivalent to symplectic maps.
To see this, assume that we are in the autonomous setting, and are given a symplectic map~$Y = \varphi(y)$.
Then $Y_y \in \LieSp(2 n)$, $Y_t = 0$, and \Fref{eq:W-cond} is satisfied by, say, $W = 0$, giving $U = u$.
Thus, $\varphi$ can be turned into a canonical transformation simply by appending the trivial update equation~$U = u$.
This is compatible with the earlier observation that $-u$ may be regarded as the energy of the system.

\section[Canonical transf.\ and integrators for non-auton.\ Hamiltonian systems]{Canonical transformations and integrators for non-autonomous Hamiltonian systems}

In this section we take a look at how existing methods with constant time-step fit into the framework of canonical transformations.
We consider the two situations where we are given either $\varphi \from \coT Q \times \RR \to \coT Q \times \RR$ or $\psi \from \coT (Q \times \RR) \to \coT (Q \times \RR)$, and would like to find the complementing map such that $(\psi, \varphi)$ is a canonical transformation.

\subsection{Constructing a canonical transformation from a given map~\texorpdfstring{$\varphi$}{φ}}

In general, this situation is already covered by \Fref{thm:existence}.

\begin{corollary} \label{cor:phi-to-psi-Lie}
	Let $(Y, T) = \varphi(y, t) = \paren*{M(t) y, t + h}$, where $M(t) \in \LieSp(2 n)$.
	Then $(\psi, \varphi)$ is a canonical transformation, with $\psi = (\varphi \circ \mu, u + W \circ \mu)$ and
	\begin{equation} \label{eq:W-lie}
		W = \tfrac{1}{2} y\trans M(t)\trans \JJ_n M'(t) y.
	\end{equation}
\end{corollary}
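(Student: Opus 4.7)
The plan is to verify the two conditions of \Fref{prn:coord}---which recasts condition~\eqref{eq:omega-tilde} of \Fref{thm:existence} into concrete linear-algebra statements about the Jacobian blocks of $\varphi$ and the partial gradient $W_y$---and then invoke \Fref{thm:existence} to conclude that $\psi = (\varphi \circ \mu, u + W \circ \mu)$ is a canonical transformation partner for $\varphi$.

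First I would read off the Jacobian blocks. Since $Y = M(t)\, y$ and $T = t + h$, we immediately have $Y_y = M(t)$ and $Y_t = M'(t)\, y$. The first requirement $Y_y \in \LieSp(2n)$ is then precisely the standing hypothesis on $M$, with nothing further to do. All the content therefore sits in verifying the identity
\[
    W_y = -Y_t\trans \JJ_n Y_y = -y\trans M'(t)\trans \JJ_n M(t).
\]

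The core step is to compute $W_y$ from the explicit quadratic form~\eqref{eq:W-lie} and match it to this expression. Setting $A \defeq M(t)\trans \JJ_n M'(t)$, we have $W = \tfrac{1}{2} y\trans A y$, so the standard gradient rule yields $W_y = \tfrac{1}{2} y\trans (A + A\trans)$. Using $\JJ_n\trans = -\JJ_n$, this simplifies to $W_y = \tfrac{1}{2} y\trans \paren*{M\trans \JJ_n M' - (M')\trans \JJ_n M}$. The key---and only non-routine---step is to differentiate the symplecticity relation $M(t)\trans \JJ_n M(t) = \JJ_n$ in $t$; this gives $M\trans \JJ_n M' = -(M')\trans \JJ_n M$, which makes the bracketed expression collapse to $-2 (M')\trans \JJ_n M$ and produces exactly the required value of $W_y$.

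I do not anticipate any real obstacle. The only conceptual point worth flagging in the write-up is that the hypothesis $M(t) \in \LieSp(2n)$ enters in two complementary ways: pointwise, to furnish $Y_y \in \LieSp(2n)$, and infinitesimally via its $t$-derivative, to symmetrise the gradient of the quadratic form $W$. The choice of $W$ in~\eqref{eq:W-lie} has been rigged precisely so that these two uses combine to yield condition~\eqref{eq:W-cond}.
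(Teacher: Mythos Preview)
Your proof is correct and follows essentially the same route as the paper: both invoke \Fref{prn:coord} to reduce to $Y_y\in\LieSp(2n)$ plus the gradient condition~\eqref{eq:W-cond}, then appeal to \Fref{thm:existence}. The only cosmetic difference is direction---the paper starts from the required $W_y=-y\trans M'(t)\trans\JJ_n M(t)$ and integrates to obtain~\eqref{eq:W-lie}, whereas you start from~\eqref{eq:W-lie} and differentiate; your explicit use of the derivative of $M\trans\JJ_n M=\JJ_n$ in fact supplies the symmetry justification that the paper's integration step leaves implicit.
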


\begin{proof}
	From \Fref{prn:coord}, we need that $Y_y = M(t) \in \LieSp(2 n)$, which is clearly satisfied.
	Condition~\Fref{eq:W-cond} says that we must find a $W$ such that
	\[
		W_y = -Y_t\trans \JJ_n Y_y = -y\trans M'(t)\trans \JJ_n M(t).
	\]
	Integrating with respect to $y$ and transposing the result, we obtain \Fref{eq:W-lie}.
	Thus, by \Fref{thm:existence}, $(\psi, \varphi)$ is a canonical transformation.
\end{proof}

The class of methods~$\varphi$ in \Fref{cor:phi-to-psi-Lie} contains among others, Magnus methods~\parencite{iserles99} $M(t) = \exp\paren[\big]{h X(t)}$, Fer methods~\parencite{iserles84} and commutator-free methods~\parencite{celledoni03-1} $M(t) = \prod_i \exp\paren[\big]{h X_i(t)}$, and Cayley methods~\parencite{marthinsen01} $M(t) = \cay\paren[\big]{h X(t)}$, where $X(t)$ and $X_i(t)$ are elements of $\Liesp(2 n)$.
All of these can be applied to non-autonomous linear Hamiltonian problems~$\dot y = A(t) y$ (i.e.\ NL problems).
To get consistent methods, we must choose $M$ carefully.
In fact, by considering the modified vector field of the methods, we get that the methods of this class are consistent if $M(t)\rvert_{h = 0} = \II_{2 n}$, and
\[
	\eval*{\diff{M(t)}{h}}_{h = 0} = A(t).
\]

\begin{example} \label{exa:mag-exp}
	Magnus integrators fit into this format by choosing $M(t) = \exp\paren[\big]{h X(t)}$ for $X \from \RR \to \Liesp(2 n)$.
	Consistency requires
	\[
		\eval*{X(t)}_{h = 0} = A(t).
	\]
	Since $M'(t) = h \paren[\big]{\dexp_{h X(t)} X'(t)} M(t)$, and $M(t)\trans \JJ_n M(t) = \JJ_n$, we can apply \Fref{cor:phi-to-psi-Lie} and express $U = u + W \circ \mu$ as
	\[
		U = u + \frac{h}{2} y\trans \JJ_n \paren*{\dexp_{-h X(t)} X'(t)} y,
	\]
	or alternatively as
	\[
		U = u + \frac{h}{2} Y\trans \JJ_n \paren*{\dexp_{h X(t)} X'(t)} Y.
	\]
\end{example}

\subsection{Constructing a canonical transformation from a given map~\texorpdfstring{$\psi$}{ψ}}

Assume that we are given a symplectomorphism~$\psi \from \coT(Q \times \RR) \to \coT(Q \times \RR)$, where the $t$-component is advanced by a constant time-step~$h$, i.e.\ $\psi$ is the map~$(q, t, p, u) = z \mapsto Z = (Q, t + h, P, U)$.
We seek a map~$\varphi$ such that $(\psi, \varphi)$ is a canonical transformation.
This is only possible if $Q$ and $P$ are independent of $u$, since we need $\mu \circ \psi = \varphi \circ \mu$.

In the following proposition, we will use local coordinates and write $x = (q, p, t) \in \coT Q \times \RR$, and $\kappa \from (q, p, u) \mapsto (q, p, h, u)$.
\begin{proposition} \label{prn:psi-to-phi}
	Let $H \from \coT Q \times \RR \to \RR$ be a Hamiltonian.
	Any symplectomorphism~$Z = \psi(z)$ which can be expressed in coordinates as
	\[
		Z = z + \kappa \circ F(H_x) \circ \mu(z),
	\]
	where $F \from \Diff(\RR^{2 n + 1}) \to \Diff(\RR^{2 n + 1})$, is a canonical transformation.
\end{proposition}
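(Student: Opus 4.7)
The plan is to invoke \Fref{def:can-trans} directly. The symplectomorphism condition $\psi^{*}\Omega_0 = \Omega_0$ is already part of the hypothesis, so what remains is to exhibit a diffeomorphism $\varphi$ of $\coT Q \times \RR$ with $\mu \circ \psi = \varphi \circ \mu$.

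The first step I would carry out is to observe that the increment $Z - z = \kappa \circ F(H_x) \circ \mu(z)$ depends on $z$ only through $\mu(z) = x$: the composition $F(H_x) \circ \mu(z)$ is by construction a function of $x$ alone, and $\kappa$ merely inserts the constant $h$ into the $t$-slot without ever touching the $u$-coordinate. Reading off components of $Z$, the $t$-component equals $t + h$, both $Q$ and $P$ are functions of $x$ alone, and $U = u + W(x)$ for a smooth $W$ given by the $u$-component of $\kappa \circ F(H_x)$. In particular, $\mu(Z)$ is a function of $\mu(z)$ alone, so setting $\varphi(x) \defeq \mu\paren*{\psi(x, u)}$ — independent of the choice of representative $u$ — yields a well-defined smooth map of $\coT Q \times \RR$ satisfying $\mu \circ \psi = \varphi \circ \mu$ by construction.

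What remains, and where the main (mild) obstacle lies, is verifying that $\varphi$ is in fact a diffeomorphism. The structural form $\psi(x, u) = \paren*{\varphi(x),\, u + W(x)}$ produces a Jacobian which is block lower-triangular with diagonal blocks $\varphi'(x)$ and $1$, so nonsingularity of $\psi'$ forces nonsingularity of $\varphi'$. For global invertibility I would apply the same $u$-independence reasoning to $\psi^{-1}$ (smooth since $\psi$ is a diffeomorphism), which shows that its first $2n+1$ components depend only on $X$ and furnish the smooth inverse $\varphi^{-1}$. With this $\varphi$ in hand, both clauses of \Fref{def:can-trans} are satisfied, so $(\psi, \varphi)$ is a canonical transformation; everything beyond this is a matter of reading off dependencies from the given form, and the write-up should therefore be short.
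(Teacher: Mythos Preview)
Your proposal is correct and follows essentially the same route as the paper: both arguments observe that the increment factors through $\mu$, so the $(Q,P,T)$-components of $\psi$ are independent of $u$, and hence a map $\varphi$ with $\mu\circ\psi=\varphi\circ\mu$ exists. You go a bit further than the paper by explicitly checking that $\varphi$ is a diffeomorphism via the block-triangular Jacobian and the structure of $\psi^{-1}$; the paper's proof simply stops once the existence of $\varphi$ is established and does not address its invertibility.
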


\begin{proof}
	The only way we can satisfy $\mu \circ \psi = \varphi \circ \mu$ is if both $Q$ and $P$ are independent of $u$.
	The vector~$H_x$ consists of the partial derivatives of $H$, which is independent of $u$, so $F(H_x)$ also has to be independent of $u$.
	Thus, the only component of $Z$ that can depend on $u$ is $U$, proving that there exists a $\varphi$ satisfying $\mu \circ \psi = \varphi \circ \mu$.
\end{proof}

\begin{corollary}
	Symplectic partitioned Runge--Kutta (SPRK) methods applied to the Hamiltonian problem with extended Hamiltonian~$K = u + H \circ \mu$ are canonical transformations.
\end{corollary}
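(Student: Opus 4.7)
The plan is to verify the two conditions in Definition of canonical transformation for an SPRK step~$\psi$ applied to~$K$. Condition (2), namely $\psi^{*}\Omega_0 = \Omega_0$, holds by the defining property of SPRK methods on the symplectic manifold~$(\coT(Q\times\RR),\Omega_0)$, where the partition pairs the coordinates~$(q^i,t)$ with their conjugates~$(p_i,u)$. So the work is entirely in condition (1), namely the existence of~$\varphi$ with $\mu\circ\psi = \varphi\circ\mu$. By \Fref{prn:psi-to-phi}, it suffices to show that the $(Q,P,T)$-components of $\psi(z)$ are independent of~$u$, which in turn follows once we show that an SPRK step applied to~$K$ produces updates of the form required there.

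First I would write out Hamilton's equations for $K = u + H\circ\mu$ as in~\Fref{eq:ext-ham}: the right-hand sides of~$\dot q^i,\dot p_i,\dot t,\dot u$ are $\partial H/\partial p_i,\ -\partial H/\partial q^i,\ 1,\ -\partial H/\partial t$. The crucial observation is that every one of these is a function of $(q,p,t)$ only: they do not depend on~$u$. Equivalently, each component of $X_K$ factors through~$\mu$ and is a (constant or) component of $H_x$ in the notation of~\Fref{prn:psi-to-phi}.

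Next I would write out the SPRK stages explicitly with the partition above. Each internal stage takes the form $Q_i = q + h\sum_j a_{ij} k^Q_j$, $T_i = t + h\sum_j a_{ij} k^T_j$, $P_i = p + h\sum_j \tilde a_{ij}k^P_j$, $U_i = u + h\sum_j \tilde a_{ij} k^U_j$, where $k^Q_j = \partial H/\partial p(Q_j,P_j,T_j)$, $k^T_j = 1$, $k^P_j = -\partial H/\partial q(Q_j,P_j,T_j)$, $k^U_j = -\partial H/\partial t(Q_j,P_j,T_j)$. Because none of these slope functions depend on~$U_j$ or~$u$, an easy induction on the stage index (trivial for explicit SPRK, and for implicit schemes it follows because the implicit system for $(Q_i,P_i,T_i)$ decouples from the~$U_i$ equation) shows that $Q_i,P_i,T_i$ are independent of~$u$. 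The final update inherits the same property, so the SPRK step has precisely the form $Z = z + \kappa\circ F(H_x)\circ\mu(z)$ of~\Fref{prn:psi-to-phi}.

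Finally I would invoke \Fref{prn:psi-to-phi} to produce the required $\varphi$ on~$\coT Q\times\RR$; combined with symplecticity of~$\psi$, this gives that $(\psi,\varphi)$ is a canonical transformation. The main (mild) obstacle I anticipate is the implicit-case bookkeeping: one must argue that the implicit system defining the stages has a solution in which $u$ only appears additively in the~$U_i$'s. This is immediate from the block-triangular structure of the stage Jacobian (the $(Q_i,P_i,T_i)$ equations do not involve~$U_i$ at all), but it is worth stating explicitly to make the factorisation through~$\mu$ rigorous.
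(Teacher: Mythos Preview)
Your proposal is correct and follows the same approach as the paper: the paper's proof is literally the single sentence ``This follows immediately from \Fref{prn:psi-to-phi},'' and the explicit stage-by-stage verification you give (that the $(Q,P,T)$ stages do not involve $u$ because the slope functions depend only on $H_x$) is deferred to the subsequent \Fref{exa:SPRK}. You have simply spelled out in the proof what the paper treats as immediate and then illustrates by example.
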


\begin{proof}
	This follows immediately from \Fref{prn:psi-to-phi}.
\end{proof}

\begin{example} \label{exa:SPRK}
	Let us check that SPRK methods actually are canonical transformations by calculating $W$ and $\varphi$.
	
	An SPRK method is given by a Butcher tableau with coefficients~$a_{i, j}$ and $b_i \neq 0$.
	The second Butcher tableau (marked by a hat) in the partitioned method is given by the first one via the formulas $\hat a_{i, j} = b_j - a_{j, i} b_j / b_i$ and $\hat b_i = b_i$.
	The method will then be
	\begin{gather*}
		\bar k_i = \pdiff{K}{\bar p} (\bar Q_i, \bar P_i), \qquad \bar l_i = -\pdiff{K}{\bar q} (\bar Q_i, \bar P_i), \\
		\bar Q_i = \bar q + h \sum_{j = 1}^s a_{i, j} \bar k_j, \qquad \bar P_i = \bar p + h \sum_{j = 1}^s \hat a_{i, j} \bar l_j, \\
		\bar Q = \bar q + h \sum_{i = 1}^s b_i \bar k_i, \qquad \bar P = \bar p + h \sum_{i = 1}^{s} b_i \bar l_i. \\
	\end{gather*}
	We can rewrite this using
	\begin{gather*}
		\bar k_i \defeq (k_i, \hat k_i) = \paren*{\pdiff{H}{p} (Q_i, P_i, T_i), 1}, \\
		\bar l_i \defeq (l_i, \hat l_i) = \paren*{-\pdiff{H}{q} (Q_i, P_i, T_i), -\pdiff{H}{t} (Q_i, P_i, T_i)},
	\end{gather*}
	and we obtain
	\begin{gather*}
		Q_i = q + h \sum_{j = 1}^{s} a_{i, j} k_j, \qquad P_i = p + h \sum_{j = 1}^{s} \hat a_{i, j} l_j, \qquad T_i = t + c_i h, \\
		Q = q + h \sum_{i = 1}^{s} b_i k_i, \qquad P = p + h \sum_{i = 1}^{s} b_i l_i, \\
		T = t + h, \qquad U = u + h \sum_{i = 1}^{s} b_i \hat l_i,
	\end{gather*}
	where $c_i = \sum_{j = 1}^{s} a_{i, j}$.
	From these formulas, we see that
	\[
		W = h \sum_{i = 1}^{s} b_i \hat l_i,
	\]
	and $Q$, $P$, $T$ and $W$ are indeed independent of $u$.
	Thus, we have found $\varphi$.

	For non-autonomous, linear problems~$\dot y = A(t) y$, $y = (q, p)$, we have the Hamiltonian~$H = -\frac{1}{2} y\trans \JJ_n A(t) y$, so writing $Y_i = (Q_i, P_i)$, we obtain
	\[
		\begin{bmatrix}
				k_i \\
				l_i
			\end{bmatrix}
			= A(T_i) Y_i, \qquad \hat l_i = \tfrac{1}{2} Y_i\trans \JJ_n A'(T_i) Y_i.
	\]
\end{example}

We note in passing that backward error analysis can be trivially adapted to the situation of non-autonomous Hamiltonian systems.
%
%
%
%
%
%
%
Since we are applying a symplectic method~$\psi$ to a Hamiltonian ODE~$\dot z = f(z)$ in extended phase space with Hamiltonian~$K$, we can apply the result from \parencite[Theorem~IX.3.1]{hairer06}, showing that the modified equation is also Hamiltonian, and has Hamiltonian
\[
	\tilde K(z) = K(z) + h K_2(z) + h^2 K_3(z) + \dotsb.
\]
The differential equation~$\dot t = 1$ is integrated exactly by the numerical method, so we can write $\tilde K = u + \tilde H \circ \mu$.
Thus,
\[
	(\tilde H - H) \circ \mu = h K_2 + h^2 K_3 + \dotsb,
\]
which shows that the energy error for the numerical method is of order no less than the order of the symplectic method in extended phase space.
%
%
%
%

\section{Numerical experiments}

In the numerical experiments, we would like to consider the situation where the non-autono\-mous problem can be viewed as a small perturbation of an autonomous problem with bounded energy.
Other more challenging problems, such as the Airy equation (which has unbounded energy), will not be considered here.
Consider the time-dependent harmonic oscillator with Hamiltonian
\begin{equation} \label{eq:H}
	H(q, p, t) = \frac{1}{2} \paren[\Big]{\paren[\big]{1 + \epsilon \sin (\alpha t)} q\trans q + p\trans p},
\end{equation}
where $q, p \in \RR^n$, $0 < \epsilon \ll 1$, and $0 < \alpha \ll 1$.
As we saw in \Fref{sec:non-aut-Ham-sys}, this Hamiltonian corresponds to the linear ODE
\[
	\dot y = A(t) y, \qquad A(t) =
	\begin{bmatrix}
		0                                                & \II_n \\
		-\paren[\big]{1 + \epsilon \sin(\alpha t)} \II_n & 0
	\end{bmatrix}.
\]
We can think of this oscillator as a slowly varying perturbation of the usual harmonic oscillator.
The time-dependent perturbation ensures that the energy~$H$ and the symplectic 2-form~$\omega_0$ of the system are no longer conserved, but since the perturbation is small and periodic, we expect that the energy is bounded as long as there is no resonance.
%

\subsection{Long-time performance}

We believe that canonical methods may be well suited for non-autonomous Hamiltonian problems where we seek a long-time numerical solution with qualitatively good results.
We will investigate this by considering the symmetric, 4th order Magnus method based on two-stage Gauss--Legendre quadrature~\parencite[Example~IV.7.4]{hairer06}.
We will call this method the Lie--Gauss method.
The update map in $\coT Q \times \RR$ is
\begin{equation} \label{eq:Lie-Gauss}
	(Y, T) = \varphi(y, t) = \paren[\Bigg]{\exp\paren[\bigg]{\frac{h}{2}(A_1 + A_2) + \frac{\sqrt 3 h^2}{12} [A_2, A_1]} y, t + h}, \qquad A_i = A(t + c_i h),
\end{equation}
where $c_1 = \frac{1}{2} - \frac{\sqrt 3}{6}$ and $c_2 = \frac{1}{2} + \frac{\sqrt 3}{6}$.
To obtain a canonical method, we follow the construction from \Fref{cor:phi-to-psi-Lie}.
This gives us the auxiliary update map $u \mapsto U$ as given by \Fref{exa:mag-exp}.
We use initial values $q_0 = (1, 2, 3, 4)$, $p_0 = (4, 1, 2, 3)$, $t_0 = 0$, $u_0 = -H(q_0, p_0, t_0)$, parameters $\alpha = 0.1$ and $\epsilon = 0.3$, and step-length $h = 0.3$.

In order to evaluate the accuracy of the method, we generate a reference solution using the same method, but with a step-length~$h = 0.03$.
Since the method is fourth-order, the reference solution is a much more accurate solution than the other one.
Denote the Hamiltonian evaluated in the reference solution and the approximate solution by $H_\textrm{ex}$ and $H_k$, respectively.
Furthermore, let $u_k$ be the $u$-component of the approximate solution, and let $K_k = u_k + H_k$ be the extended Hamiltonian evaluated in the approximate solution.

\begin{figure}
	\centering
	\subfloat[$H_k - H_\textrm{ex}$ (blue) and $-u_k - H_\textrm{ex}$ (red)\label{fig:gauss2a}]{
		\includegraphics{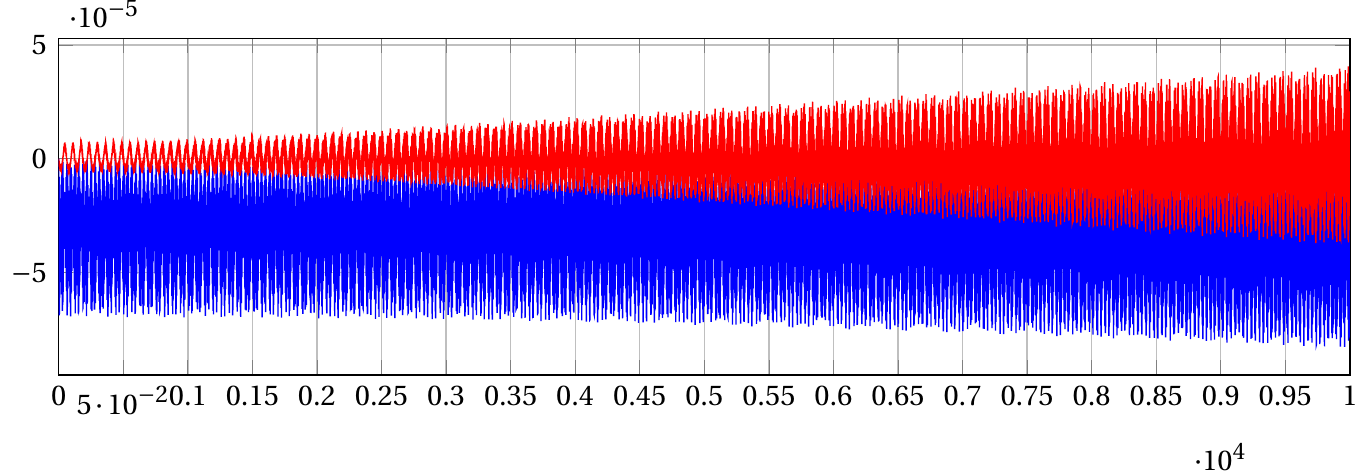}
	} \\
	\subfloat[Close-up of $H_k$ (blue), $-u_k$ (red), and $H_\textrm{ex}$ (brown)\label{fig:gauss2c}]{
		\includegraphics{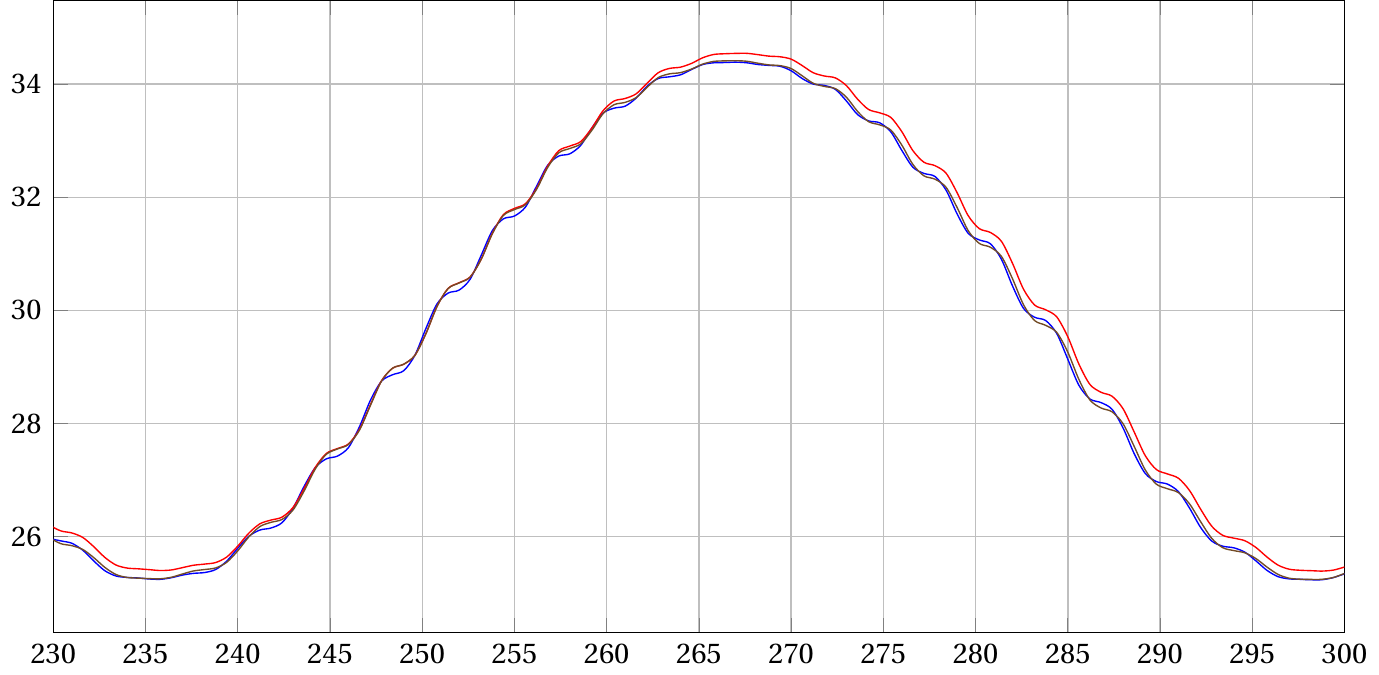}
	}
	\caption{Long-time behaviour.
	Time~$t_k$ is plotted along the $x$-axis}
\end{figure}


In \Fref{fig:gauss2a}, we display $H_k - H_\mathrm{ex}$ and $-u_k - H_\mathrm{ex}$ as functions of time $t_k$.
We see that both $H_k$ and $-u_k$ stay close to $H_\mathrm{ex}$ over long time.
If we had plotted $K_k$, we would have observed that this is well preserved over long time, as expected of canonical methods.
In \Fref{fig:gauss2c}, we plot $H_k$ and $-u_k$ together with $H_\mathrm{ex}$ to get a better understanding of our simulation.
In order to get a visible separation of the three curves, we had to switch to a step-length~$h = 0.6$ together with a lower-order method, namely the 1st~order Magnus method based on the update~$y \mapsto \exp\paren[\big]{h A(t)} y$ (i.e.\ the Lie--Euler method).
We observe that the two approximates to the energy, $H_k$ and $-u_k$, oscillate near the reference solution.

\subsection{Symmetric methods and canonical transformations}

In the previous subsection, we applied a symmetric and canonical method to a non-autono\-mous Hamiltonian problem, and observed good long-time behaviour.
In this subsection, we test the four different combinations of symmetric and canonical methods on the same problem.
Three of the methods, namely methods (a), (b), and (d) below are 2nd order Runge--Kutta methods applied to the Hamiltonian equations~\Fref{eq:ext-ham} in extended phase space.
Method (c) is different and is explained in detail below.
For each Runge--Kutta method, we indicate whether the method is symmetric and/or canonical.
We choose (b) and (d) so that neither of them are conjugate to symplectic in order to rule out this potential source of unwanted good long-time behaviour~\parencite{celledoni13}.
We choose the following methods:
\begin{enumerate}[label=(\alph*),ref=\alph*]
	\item The midpoint method (both symmetric and canonical)
	\item Kahan's method (only symmetric)
	\item A projection-based method (only canonical)
	\item Lobatto IIIC (neither symmetric nor canonical)
\end{enumerate}
The midpoint method is canonical since it can be viewed as an SPRK method (see \Fref{exa:SPRK}).
The projection-based method is based on the idea of projecting the truncated Taylor series of the exact solution onto the symplectic Lie algebra so that we obtain an integrator that can be extended to a canonical transformation.
This demonstrates that we can get canonical methods (and good long-time behaviour) even if we use projections.
Using projection as a device for energy preservation is known to give unsatisfactory results in many cases~(see \parencite[pp.~112--113]{hairer06}).

The exact solution of $\dot y = A(t) y$ is $y(t + h) = \paren[\big]{\II_{2 n} + h A(t) + \OO(h^2)} y(t)$.
Our goal is to use the Taylor series to obtain a consistent method of the format $Y = M(t) y = \exp\paren[\big]{h X(t)} y$ with $X(t) \in \Liesp(2 n)$, as discussed in \Fref{exa:mag-exp}.
Let $\Pi \from \Liegl(2 n) \to \Liesp(2 n)$ be the linear projection
\[
	\Pi(X) = \tfrac{1}{2} (X + \JJ_n X\trans \JJ_n).
\]
The projection-based method is then defined as
\begin{equation} \label{eq:proj}
	Y = M(t) y = \exp \circ \Pi \circ \log\paren[\big]{\II_{2 n} + h A(t)} y,
\end{equation}
together with the auxiliary update equation
\[
	U = u + \frac{h}{2} Y\trans \JJ_n \paren*{\dexp_{h X(t)} X'(t)} Y,
\]
with $X(t) = \tfrac{1}{h} \Pi \circ \log\paren[\big]{\II_{2 n} + h A(t)}$.
By Taylor expansion of the logarithm, we see that $X(t)\rvert_{h = 0} = A(t)$.
Thus, the method is consistent, i.e.\ of order one.

We use initial values $q_0 = (1, 2, 3, 4)$, $p_0 = (4, 1, 2, 3)$, $t_0 = 0$, $u_0 = -H(q_0, p_0, t_0)$, parameters $\alpha = 0.123$ and $\epsilon = 0.6$, and step-length $h = 0.3$.
The time evolution of the Hamiltonian evaluated in the numerical solution, as well as minus the auxiliary variable~$u_k$ are shown in~\Fref{fig:sym-can}.
We observe that all the methods perform well, except for Lobatto IIIC.

\begin{figure}
	\centering
	\subfloat[Midpoint method (both symmetric and canonical)]{
		\includegraphics{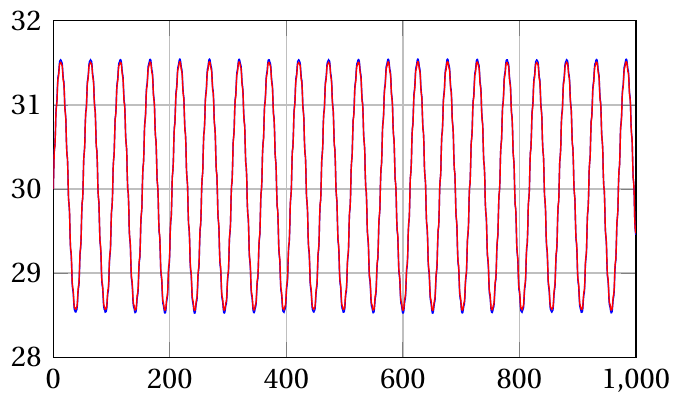}
	} \quad
	\subfloat[Kahan's method (only symmetric)]{
		\includegraphics{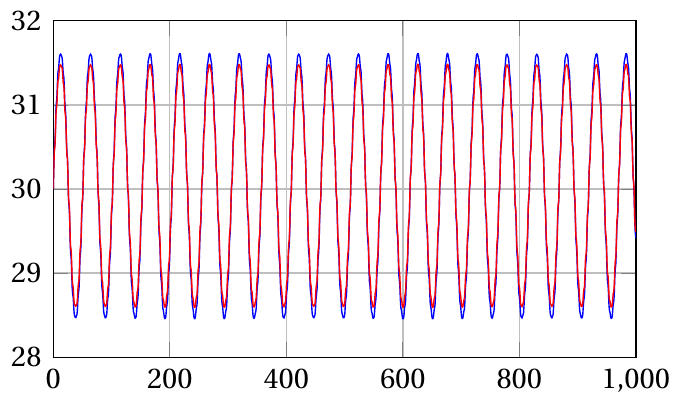}
	} \\
	\subfloat[Projection-based method (only canonical)]{
		\includegraphics{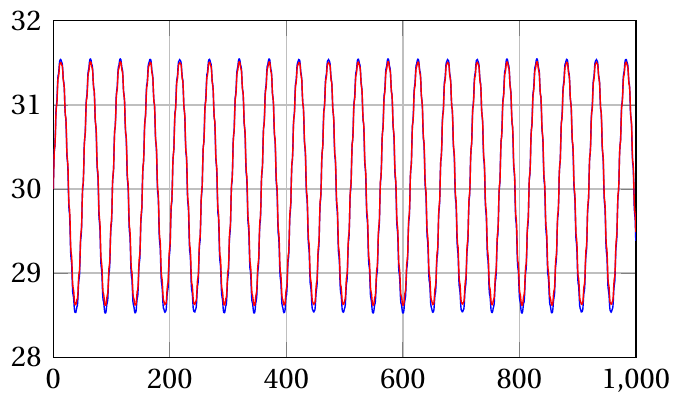}
	} \quad
	\subfloat[Lobatto IIIC (neither symmetric nor canonical)]{
		\includegraphics{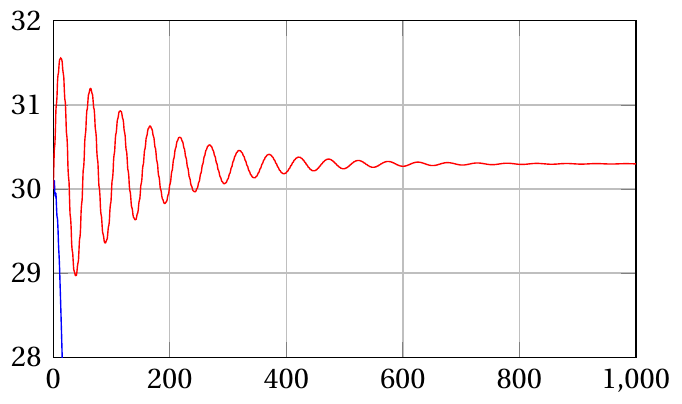}
	}
	\caption{Symmetric and canonical methods.
	$H(q_k, p_k, t_k)$ is plotted in blue and $-u_k$ in red, both with $t_k$ along the $x$-axis
	\label{fig:sym-can}}
\end{figure}

\subsection{Canonical, symmetric, and exponential methods}

In the final experiment, we compare methods with combinations of three different properties, namely canonical, symmetric, and exponential methods.
We have met canonical and symmetric methods earlier, but not exponential methods. By exponential, we mean methods that solve the ODE exactly if they are applied to an autonomous, linear (AL) problem, i.e.\ if $A(t)$ is actually independent of $t$.
Magnus methods are exponential, since all their commutators will disappear, leaving the exact solution in the AL~case.

The methods tested are:
\begin{description}
	\item[Lie--Gauss]
	The fourth order Lie--Gauss method given by \eqref{eq:Lie-Gauss}.
	\item[Lie--midpoint]
	The method given by
	\[
		Y = \exp(h A_{1 / 2}) y, \qquad A_{1 / 2} = A(t + h / 2).
	\]
	\item[Lie--Euler]
	The method given by
	\[
		Y = \exp(h A_0) y, \qquad A_0 = A(t).
	\]
	\item[Gauss--Legendre]
	The fourth order Gauss--Legendre Runge--Kutta method.
	\item[Midpoint]
	The standard midpoint Runge--Kutta method.
	\item[Kahan]
	Kahan's method (viewed as a Runge--Kutta method~\parencite{celledoni13}).
	\item[Projection]
	The projection-based method given by \eqref{eq:proj}.
	\item[Radau IIA]
	The Radau IIA Runge--Kutta method of order three \parencite[Table~IV.5.5]{hairer96}.
	This method was chosen as an example of a method which is neither exponential, symmetric, nor canonical.
	\item[Symplectic Euler]
	The symplectic partitioned Runge--Kutta method \parencite[Theorem~VI.3.3]{hairer06}
	\[
		Q = q + h H_p(P, q, t), \qquad P = p - h H_q(P, q, t).
	\]
	\item[ExpNonCan]
	The method given by
	\[
		Y = \exp \paren[\big]{h A_0 (\II_{2 n} + h A_0 [A_0, A_1])} y, \qquad A_i = A(t + i h).
	\]
	This method has been constructed to be exponential, but not canonical, since we apply the exponential map to something which lies outside of $\Liesp(2 n)$.
	The commutator ensures that we get the exact solution if we apply the method to an AL~problem.
	\item[ExpSymNonCan]
	The method given by
	\[
		Y = \exp \paren[\big]{h A_{1 / 2} (\II_{2 n} + h A_{1 / 2} [A_0, A_1])} y, \qquad A_i = A(t + i h).
	\]
	This method is similar to ExpNonCan, but has been modified to ensure that it is symmetric.
\end{description}
All the methods advance time using $T = t + h$.
We ignore the $u$-component of the canonical methods, since in this experiment we are measuring the energy error~$\lvert H_k - H_\textrm{ex} \rvert$, which is independent of $u$.
In addition to these methods, we also include some compositions of symmetric methods using the triple jump of order 4~\parencite[Example~II.4.2]{hairer06}.
See \Fref{tab:max-energy-errors} for a summary of the properties and order of each of the methods.

We test these methods on the same Hamiltonian as before, with initial values $q_0 = (1, 2, 3, 4)$, $p_0 = (4, 1, 2, 3)$, $t_0 = 0$, $u_0 = -H(q_0, p_0, t_0)$, parameters $\alpha = 0.123$ and $\epsilon = 0.1$, and step-length $h = 0.3$.
The reference solution, giving $H_\textrm{ex}$, is calculated using the fourth order Lie--Gauss method with $h = 0.02$.
The time interval of the experiment is $[0, 50\,000]$.

\begin{figure}
	\centering
	\includegraphics{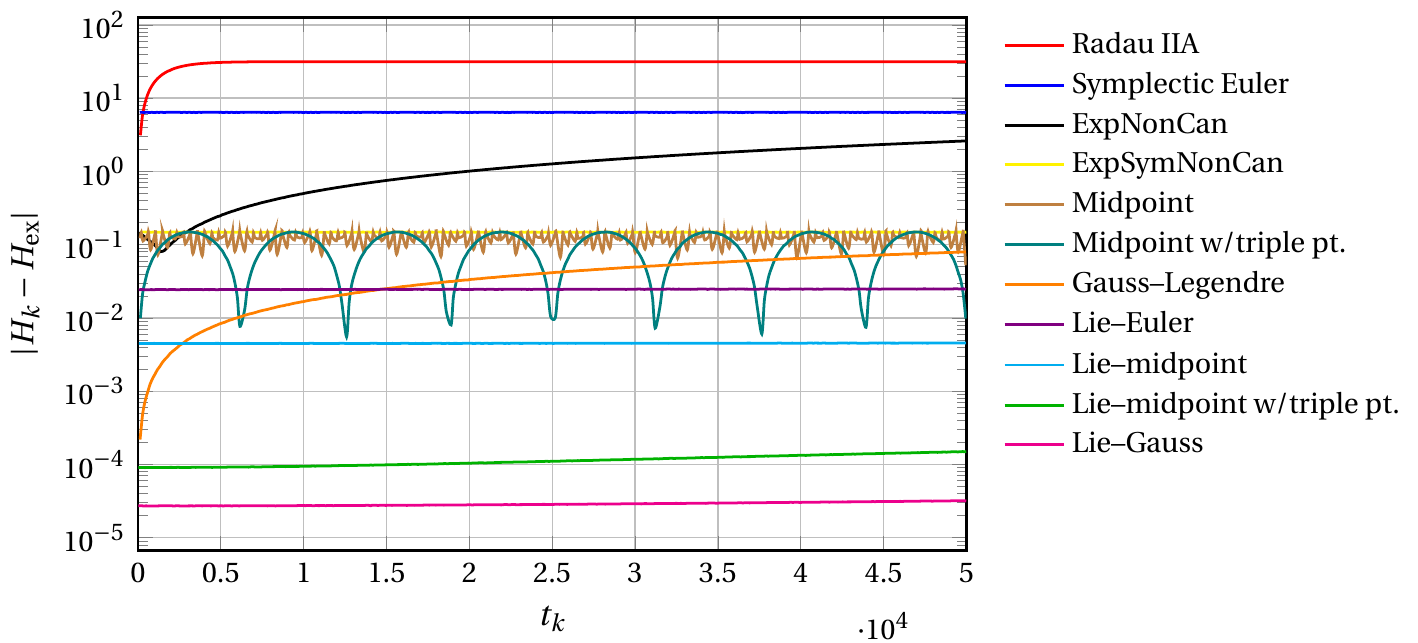}
	\caption{Smoothed energy errors
	\label{fig:long-time-energy-error}}
\end{figure}

The energy error~$\lvert H_k - H_\textrm{ex} \rvert$ oscillates rapidly around zero, and therefore, plotting this quantity is not helpful.
Instead, we divide the time interval into subintervals containing 500~samples each, and plot the maximum energy error within each subinterval.
This procedure smooths out the oscillations, but retains the relevant information about the size of the energy error.
The smoothed energy error is presented in \Fref{fig:long-time-energy-error}.
Many of the schemes yield very similar results, and therefore we only plot some of them.
In particular, the midpoint and Kahan methods give similar results for this example.

In \Fref{tab:max-energy-errors}, we summarize the results of this experiment.
The top part of the table consists of the methods with the smallest energy errors.
They all have maximum energy errors of less than $\approx 0.15$, which is the maximum possible error we can get if the rapidly oscillating component of $H_k$ (see \Fref{fig:gauss2c}) is completely out of phase with the exact solution.
We will call this maximum phase error.
The middle part of the table consists of the methods which follow the slowly oscillating component of $H_k$ fairly well, but which attain the maximum possible phase error of $\approx 0.15$.
The last part of the table contains the worst methods, with errors larger than the maximum possible phase error.

From the table, we see that for this problem, the best methods are the ones which are canonical, symmetric, and exponential.
The methods that perform the worst only have one or none of these properties.
Even though the Gauss--Legendre method is placed in the top tier of the table, we observe in \Fref{fig:long-time-energy-error} that the energy error keeps growing for the whole time interval.
The other methods in this part of the table have energy errors that remain at the same level throughout the interval.

%

\begin{table}
	\caption{Order, properties, and maximum energy errors for a selection of different methods.
	Properties: C~(canonical), S~(symmetric), E~(exponential).\label{tab:max-energy-errors}}
	\medskip
	\centering
	\begin{tabular}{@{}llll@{}}
		\toprule
		Method                         & Order & Properties    & Max energy error     \\ \midrule
		Lie--Gauss                     & 4     & CSE           & $3.20 \cdot 10^{-5}$ \\
		Lie--midpoint with triple jump & 4     & CSE           & $1.50 \cdot 10^{-4}$ \\
		Lie--midpoint                  & 2     & CSE           & $4.56 \cdot 10^{-3}$ \\
		Lie--Euler                     & 1     & C\phantom{S}E & $2.50 \cdot 10^{-2}$ \\
		Gauss--Legendre                & 4     & CS            & $7.98 \cdot 10^{-2}$ \\ \midrule
		Midpoint with triple jump      & 4     & CS            & $1.49 \cdot 10^{-1}$ \\
		Midpoint                       & 2     & CS            & $1.49 \cdot 10^{-1}$ \\
		ExpSymNonCan                   & 1     & \phantom{C}SE & $1.49 \cdot 10^{-1}$ \\
		Kahan with triple jump         & 4     & \phantom{C}S  & $1.50 \cdot 10^{-1}$ \\
		Projection                     & 1     & C             & $1.53 \cdot 10^{-1}$ \\
		Kahan                          & 2     & \phantom{C}S  & $1.68 \cdot 10^{-1}$ \\ \midrule
		ExpNonCan                      & 1     & \phantom{CS}E & $2.61 \cdot 10^{0}$  \\
		Symplectic Euler               & 1     & C             & $6.44 \cdot 10^{0}$  \\
		Radau IIA                      & 3     &               & $3.15 \cdot 10^{1}$  \\ \bottomrule
	\end{tabular}
\end{table}

\section{Conclusion}

In this paper we have taken a new look at numerical integrators for Hamiltonian problems where the energy function depends explicitly on time.
Using the framework of canonical transformations defined by \parencite{asorey83}, we have characterized integrators which are canonical according to this definition.
In particular we have studied methods for linear non-autonomous equations, a problem class which has attracted considerable interest from the numerical analysis community in recent decades.
We have not obtained analytical results which rigorously support  the hypothesis that canonical methods can be expected to have good long time behaviour.
However, numerical tests for a toy problem, a smooth oscillator, seem to corroborate such an assumption.
It is unclear whether the by now classical approach of backward error analysis will be a useful tool in studying error growth of canonical methods since the analysis should allow for highly oscillatory problems and linear PDEs.
We believe however, that the notion of canonical transformations used in this paper may be a viable route to gain a better insight into the excellent properties of exponential integrators applied to linear non-autonomous Hamiltonian problems.

\printbibliography

\end{document}